\numberwithin{equation}{section}
\newtheorem{theorem}{Theorem}[section]
\begin{document}
\author{Alexander E Patkowski}
\title{A New Integral Equation and Some Integrals Associated with Number Theory}

\maketitle
\begin{abstract}We utilize a combination of integral transforms, including the Laplace transform, with some classical results in analytic number theory concerning the Riemann $\xi$-function, to obtain a new integral equation. We also provide a new proof of known functional-type identities from analytic number theory, and recast some criteria associated with the RH. We also describe an application of our integral equation to the Dirichlet problem in the half plane, giving a new application of the Riemann xi function.\end{abstract}

\keywords{\it Keywords: \rm Fourier Integrals; Riemann Xi Function}

\subjclass{ \it 2010 Mathematics Subject Classification 11M06, 33C05.}

\section{Introduction}
In Titchmarsh [16, pg.35], we find a famous relation connecting Fourier cosine transforms with Mellin transforms 
\begin{equation}\int_{0}^{\infty}f(t)\lambda(t)\cos(xt)dt=\frac{1}{2i\sqrt{y}}\int_{\frac{1}{2}-i\infty}^{\frac{1}{2}+i\infty}\phi(s-\frac{1}{2})\phi(\frac{1}{2}-s)(s-1)\Gamma(1+\frac{s}{2})\pi^{-\frac{s}{2}}\zeta(s)y^sds,\end{equation}
where $f(t)=|\phi(it)|^2$ ($\phi(s)$ is analytic). $\zeta(s)$ is the Riemann zeta function [8, 16], $\Gamma(s)$ is the gamma function, $\xi(s):=\frac{1}{2}s(s-1)\pi^{-\frac{s}{2}}\Gamma(\frac{s}{2})\zeta(s),$ $\lambda(t):=\xi(\frac{1}{2}+it)$ (Riemann's $\Xi$ function), and $y=e^{x}.$ Recall [8, 16] the famous Riemann $\xi$ function satisfies the functional equation
\begin{equation}\xi(s)=\frac{1}{2}s(s-1)\pi^{-\frac{s}{2}}\Gamma(\frac{s}{2})\zeta(s)=\frac{1}{2}s(s-1)\pi^{-\frac{1-s}{2}}\Gamma(\frac{1-s}{2})\zeta(1-s).\end{equation} 
We shall let $\Re(s)$ and $\Im(s)$ denote the real and complex parts, respectively, for $s\in\mathbb{C}.$ Many authors [5, 6, 10, 11, 14, 15, 16] have utilized equation (1.1) and its variations to obtain many interesting relations among other special functions, as well as results on $\lambda(t).$ See also [12] for some interesting ideas on integrals related to Riemann's $\xi$ function. The most widely cited example in the literature appears to be [16, eq.(2.16.1)] (for example, see [2, 8]), and its variant [16, eq.(2.16.2)]:
\begin{equation}\Lambda(x):=\int_{0}^{\infty}\frac{\lambda(t)}{t^2+\frac{1}{4}}\cos(xt)dt=\frac{\pi}{2}\left(e^{x/2}-2e^{-x/2}\psi(e^{-2x})\right).\end{equation}
Here we have a slightly modified Jacobi theta function $\psi(x)=\sum_{n\ge1}e^{-\pi n^2 x}.$ \par The purpose of this paper is to offer some further consequences of the integral formula (1.3) that appear to be overlooked. The main one being a new integral equation that has some resemblance to the Fredholm integral equation of the second kind, which may be 
helpful in studying the zero's of the Riemann $\xi$ function. The integral equation implies several known important facts, including the integral representation
for the Riemann xi-function, and a convergent series involving the incomplete gamma function. We believe it is most
likely that, developing properties of its solutions will lead to further information about the Riemann $\xi$ function.
\begin{theorem} \it When $\Re(s)>1,$ we have
\begin{equation}\begin{aligned}&\Upsilon(s):=(s-\frac{1}{2})\int_{0}^{\infty}\frac{\lambda(t)}{(t^2+\frac{1}{4})(t^2+(s-\frac{1}{2})^2)}dt\\
&=\frac{\pi}{2}\left(\frac{1}{s-1}-\frac{2\xi(s)}{s(s-1)}+\pi^{-s/2}\sum_{n\ge1}n^{-s}\Gamma(\frac{s}{2},\pi n^2)\right).\end{aligned}\end{equation}
Or equivalently, the Riemann $\xi$-function satisfies the integral equation 
\begin{equation}\int_{0}^{\infty}f(\frac{1}{2}+it)K(s,t)dt=\frac{\pi}{2}\left(\frac{-2f(s)}{s(s-1)}+g(s)\right),\end{equation} with kernal $K(s,t)=(t^2+\frac{1}{4})^{-1}(t^2+(s-\frac{1}{2})^2)^{-1}(s-\frac{1}{2}).$ 
\end{theorem}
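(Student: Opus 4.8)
The plan is to prove the explicit evaluation (1.17) and then to read off the reformulation (1.18) as the same statement with $f=\xi$. I would establish (1.17) through the Laplace-transform chain (1.5)--(1.14), corroborate it independently via the M\"untz-formula computation (1.15)--(1.16), and finally identify (1.18) by the substitution $f=\xi$, $f(\tfrac12+it)=\lambda(t)$.

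For the Laplace route, note first that $\Lambda(x)=O(e^{x/2})$ by (1.3), so $\mathcal{L}(\Lambda)(s-\tfrac12)$ converges and is analytic for $\Re(s)>1$. Writing $\Lambda$ as its defining cosine integral and interchanging the order of integration --- justified because $\lambda(t)=\xi(\tfrac12+it)$ decays exponentially (from the $\Gamma(\tfrac s2)$ factor), so that $\int_0^\infty \frac{|\lambda(t)|}{t^2+1/4}\,dt<\infty$ --- the elementary evaluation $\int_0^\infty e^{-(s-1/2)x}\cos(xt)\,dx=\frac{s-1/2}{(s-1/2)^2+t^2}$ produces exactly the integral defining $\Upsilon(s)$. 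Separately, inserting the closed form (1.3) and splitting off $\frac{\pi}{2}\int_0^\infty e^{-sx}e^{x}\,dx=\frac{\pi}{2(s-1)}$, I would handle the theta contribution term by term: the change of variables $x=-\log t$ of (1.10) followed by the rescaling in (1.11)--(1.12) turns each summand into $\pi^{-s/2}n^{-s}\gamma(\tfrac s2,\pi n^2)$, and writing $\gamma(\tfrac s2,\pi n^2)=\Gamma(\tfrac s2)-\Gamma(\tfrac s2,\pi n^2)$ together with $\pi^{-s/2}\Gamma(\tfrac s2)\zeta(s)=\frac{2\xi(s)}{s(s-1)}$ from (1.2) assembles the right-hand side of (1.17).

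As an independent confirmation I would integrate the M\"untz instance (1.15) against $x^{v-1}$ over $[0,z]$ to obtain (1.16), the only delicate point being the vanishing boundary term $0^{v-s}=0$ for $\Re(v-s)>0$ already noted. To match the Mellin--Barnes integral on the left of (1.16) (specialized to $a=\pi$, $z=1$, $\sigma=\tfrac12$) with $\Upsilon(v)$, I would substitute $s=\tfrac12+it$, use $s(s-1)=-(t^2+\tfrac14)$ and $\pi^{-s/2}\Gamma(\tfrac s2)\zeta(s)=\frac{2\xi(s)}{s(s-1)}$ to reduce the integrand to a constant multiple of $\frac{\lambda(t)}{(t^2+1/4)((v-1/2)-it)}$, and then fold $t$ against $-t$ using the evenness of $\lambda$; the two half-lines recombine into $\frac{2(v-1/2)}{(t^2+1/4)((v-1/2)^2+t^2)}$, which is the real half-line integral defining $\Upsilon(v)$, and tracking the normalizing constants yields the asserted identity.

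The reformulation (1.18) is then immediate: taking $f=\xi$ gives $f(\tfrac12+it)=\lambda(t)$ and recovers the kernel $K(s,t)=(t^2+\tfrac14)^{-1}(t^2+(s-\tfrac12)^2)^{-1}(s-\tfrac12)$ on the left, while the self-referential term $\frac{-2f(s)}{s(s-1)}$ is isolated on the right and $g(s)=\frac{1}{s-1}+\pi^{-s/2}\sum_{n\ge1}n^{-s}\Gamma(\tfrac s2,\pi n^2)$ collects the remaining explicitly computable terms. I expect the main obstacle to lie not in any single algebraic identity but in the analytic bookkeeping: rigorously justifying the two interchanges (the Fubini swap and the term-by-term Laplace transform of the theta series, including the $r\to0^{+}$ limits near $x=0$ in (1.9)--(1.11)), and validating the passage from the vertical-line contour integral of (1.16) to the real cosine integral, where the decay of $\Gamma(\tfrac s2)$ on $\sigma=\tfrac12$ together with the symmetry folding must be controlled with care.
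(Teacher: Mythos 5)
Your proposal is correct and follows essentially the same route as the paper: the Laplace transform of $\Lambda(x)$ applied to both sides of (1.3), with the change of variables $x=-\log t$ and the incomplete-gamma decomposition $\gamma=\Gamma-\Gamma(\cdot,\cdot)$, corroborated independently by integrating the M\"untz instance (1.15) to reach (1.16), exactly as the paper does in (1.5)--(1.16). Your added attention to the Fubini interchanges and to the contour-to-cosine-integral matching only makes explicit what the paper leaves implicit.
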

\begin{proof} As usual, we define the Laplace transform to be
\begin{equation}\mathcal{L}(f)(s):=\int_{0}^{\infty}f(t)e^{-st}dt.\end{equation}
If we assume $\Re(s)>1,$ we may write
\begin{align}\mathcal{L}(\Lambda(x))(s-\frac{1}{2})\\
&=\int_{0}^{\infty}e^{-(s-\frac{1}{2})x}\int_{0}^{\infty}\frac{\lambda(t)}{t^2+\frac{1}{4}}\cos(xt)dtdx\\
&=(s-\frac{1}{2})\int_{0}^{\infty}\frac{\lambda(t)}{(t^2+\frac{1}{4})(t^2+(s-\frac{1}{2})^2)}dt\\
&=\frac{\pi}{2}\int_{0}^{\infty}e^{-sx}\left(e^{x}-2\psi(e^{-2x})\right)dx\\
&=\frac{\pi}{2}\left(\frac{1}{s-1}-2\int_{0}^{\infty}e^{-sx}\psi(e^{-2x})\right)dx\\
&=\frac{\pi}{2}\left(\frac{1}{s-1}+2\lim_{r\rightarrow0^{+}}\int_{1}^{r}t^{s-1}\psi(t^2)dt\right)\\
&=\frac{\pi}{2}\left(\frac{1}{s-1}-x^{-\frac{s}{2}}\sum_{n\ge1}\lim_{r\rightarrow0^{+}}\int_{r}^{x}t^{\frac{s}{2}-1}e^{-\pi n^2 t/x}dt\right)\\
&=\frac{\pi}{2}\left(\frac{1}{s-1}-\pi^{-\frac{s}{2}}\sum_{n\ge1}n^{-s}\gamma(\frac{s}{2},\pi n^2)\right)\\
&=\frac{\pi}{2}\left(\frac{1}{s-1}-\pi^{-\frac{s}{2}}\sum_{n\ge1}n^{-s}\left(\Gamma(\frac{s}{2})-\Gamma(\frac{s}{2},\pi n^2)\right)\right)\\
&=\frac{\pi}{2}\left(\frac{1}{s-1}-\frac{2\xi(s)}{s(s-1)}+\pi^{-s/2}\sum_{n\ge1}n^{-s}\Gamma(\frac{s}{2},\pi n^2)\right)
\end{align}
In the line (1.13) we made a change of variables $x=-\log t,$ and in subsequent lines employed the definition of the incomplete gamma functions
$$\gamma(s,x)=\int_{0}^{x}t^{s-1}e^{-t}dt, ~~~~~~~~\mbox{and}~~~~~~~~ \Gamma(s,x)=\int_{x}^{\infty}t^{s-1}e^{-t}dt,$$
where $\Gamma(s)=\gamma(s,x)+\Gamma(s,x).$ The assertion that $g(s)$ may be explicitly computed follows directly from properties of Mellin transforms. \end{proof}
Note that by an instance $F(y)=e^{-ay^2}$ of the M\"{u}ntz formua [15, eq.(2.11.1)], for $0<\sigma<1,$
\begin{equation}\frac{1}{2\pi i}\int_{\sigma-i\infty}^{\sigma+i\infty}\Gamma(\frac{s}{2})\zeta(s)(\sqrt{a}x)^{-s}ds=\sum_{n\ge1}e^{-an^2x^2}-\frac{1}{2x}\sqrt{\frac{\pi}{a}}.\end{equation}
Multiplying through by $x^{v-1},$ with $\Re(v)>1,$ and integrating over the interval $[0, z],$ $z>0,$ we obtain
\begin{equation}\frac{z^v}{2\pi i}\int_{\sigma-i\infty}^{\sigma+i\infty}\Gamma(\frac{s}{2})\zeta(s)\frac{(\sqrt{a}z)^{-s}}{(v-s)}ds=\frac{a^{-v/2}}{2}\sum_{n\ge1}n^{-v}\gamma(\frac{v}{2},a zn^2)-\frac{z^{v-1}}{2(v-1)}\sqrt{\frac{\pi}{a}},\end{equation}
again for $0<\sigma<1.$ (Note this integration is justified on the left side, since if $\Re(v-s)>0,$ then $0^{v-s}=0.$) On the other hand the integral on the left-hand side of (1.18) is precisely $\Upsilon(v)$ defined in Theorem 1.1, when $a=\pi,$ $z=1,$ $\sigma=\frac{1}{2}.$ Hence we have arrived twice at Theorem 1.1. \par
From Titchmarsh [16, pg. 257] we have $\lambda(t)\ll t^Ae^{-\frac{\pi}{4}t},$ which gives us 
$$\Upsilon(s)\ll \int_{0}^{\infty}\frac{t^Ae^{-\frac{\pi}{4}t}}{(t^2+\frac{1}{4})(t^2+(s-\frac{1}{2})^2)}dt<+\infty.$$
\par Clearly we have that $\Upsilon(s)=-\Upsilon(1-s).$ Upon noticing this fact, we may use (1.18) to obtain a well-known result concerning $\zeta(s).$ Computing the residue $R_{s=0}=\zeta(0)/v=-1/(2v)$ out from the integral, we may then extract the series \newline $\frac{1}{2}\pi^{-(1-v)/2}\sum_{n\ge1}n^{-(1-v)}\Gamma(\frac{1-v}{2},\pi n^2)$ to obtain the following expansion [1, pg.256, eq.(30)] as a direct corollary to Theorem 1.1:
\begin{equation}\pi^{-s/2}\zeta(s)\Gamma(\frac{s}{2})=\frac{1}{s(s-1)}+\pi^{-s/2}\sum_{n\ge1}n^{-s}\Gamma(\frac{s}{2},\pi n^2)+\pi^{-(1-s)/2}\sum_{n\ge1}n^{-(1-s)}\Gamma(\frac{1-s}{2},\pi n^2).\end{equation}

\section{Imaginary quadratic forms and the associated integral equation}
Following [8, pg.511] put
\begin{equation}L_{K}(s, \chi)=\sum_{\mathfrak{a}}\chi(\mathfrak{a})(N\mathfrak{a})^{-s},\end{equation} where $\Re(s)>1,$
and $\chi$ maps the class group $\mathfrak{H}$ to the complex plane $\mathbb{C}.$ In [8, 9], we find N. S. Koshlyakov investigating integrals related to $L$-functions associated with number fields as well as [14, 16]. We consider his work coupled with that of the ideas in the introduction. Let $D$ denote a discriminant with respect to a primitive ideal $\mathfrak{a}.$ Then we have 
the functional equation [8, eq.(22.51)]
\begin{equation} \Omega_K(s,\chi)=\Omega_K(1-s,\chi),\end{equation}
where $\Omega_K(s,\chi)=(2\pi)^{-s}\Gamma(s)|D|^{\frac{s}{2}}L_K(s,\chi).$ \par An important integral representation relevant to our study, which continues $L_K(s,\chi)$ to the entire complex plane, is given by Hecke [8, eq.(22.52)]:
\begin{equation}\Omega_K(s,\chi)=\frac{|\mathfrak{H}|\delta(\chi)}{\bar{D}s(s-1)}+\int_{1}^{\infty}(t^{s-1}+t^{-s})\sum_{\mathfrak{a}}\chi(\mathfrak{a})e^{-2\pi tN\mathfrak{a}/\sqrt{|D|}}dt,\end{equation}
where $\mathfrak{|H|}$ is the class number, and $\bar{D}$ is $6$ if $D=-3,$ $-4$ if $D=4,$ and $2$ if $D<-4.$ We shall prove an equivalent form of this integral representation toward the end using Theorem 2.1. Put  $\Omega_K(\frac{1}{2}+it,\chi)=\mathfrak{O}_K(t),$ and note that
\begin{equation}\int_{0}^{\infty}f(t)\mathfrak{O}_K(t)\cos(xt)dt=\frac{1}{2i\sqrt{y}}\int_{\frac{1}{2}-i\infty}^{\frac{1}{2}+i\infty}\phi(s-\frac{1}{2})\phi(\frac{1}{2}-s)(2\pi)^{-s}|D|^{s/2}\Gamma(s)L_K(s,\chi)y^sds.\end{equation}
N.S. Koshlyakov appears to be the first to consider instances of this type of general integral with $L$-functions associated with number fields (see, especially, [11, pg.217--220]). In this section we give an equivalent integral equation for imaginary quadratic forms, using all these ideas.

\begin{theorem} \it For $\Re(s)>1,$ we have that
\begin{equation}\mathfrak{\Upsilon}(s)=\int_{0}^{\infty}\mathfrak{O}_K(t)\bar{K}(s,t)dt=\frac{\pi}{2}\left(\frac{|\mathfrak{H}|\delta(\chi)}{\bar{D}(s-1)}-\Omega_K(s,\chi)+\sum_{\mathfrak{a}}\chi(\mathfrak{a})\left(\frac{\sqrt{|D|}}{2\pi N\mathfrak{a}}\right)^s\Gamma(s,\frac{2\pi N\mathfrak{a}}{\sqrt{|D|}})\right),\end{equation}
with kernel $\bar{K}(s,t)=(s-\frac{1}{2})(t^2+(s-\frac{1}{2})^2)^{-1}.$ 
\end{theorem}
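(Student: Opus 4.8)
The plan is to mirror the Laplace-transform derivation of Theorem 1 carried out in lines (1.5)--(1.14) of the introduction, now applied to the identity (2.6) in place of (1.3). Define $\mathfrak{G}(x)=\int_{0}^{\infty}\mathfrak{O}_K(t)\cos(xt)dt$ and, assuming $\Re(s)>1$, compute $\mathcal{L}(\mathfrak{G})(s-\frac{1}{2})$. First I would apply the Laplace operator to the left-hand side: after justifying Fubini's theorem, the interchange of the $x$- and $t$-integrals together with the elementary evaluation
\begin{equation*}\int_{0}^{\infty}e^{-(s-\frac{1}{2})x}\cos(xt)dx=\frac{s-\frac{1}{2}}{(s-\frac{1}{2})^2+t^2}\end{equation*}
produces exactly $\int_{0}^{\infty}\mathfrak{O}_K(t)\bar{K}(s,t)dt=\mathfrak{\Upsilon}(s)$, which is the quantity we wish to evaluate.

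Next I would apply the same Laplace operator to the right-hand side of (2.6), treating its two pieces separately. The elementary term contributes
\begin{equation*}\frac{\pi}{2}\frac{|\mathfrak{H}|\delta(\chi)}{\bar{D}}\int_{0}^{\infty}e^{-(s-1)x}dx=\frac{\pi}{2}\frac{|\mathfrak{H}|\delta(\chi)}{\bar{D}(s-1)},\end{equation*}
which converges precisely because $\Re(s)>1$. For the theta-type term I would substitute $u=e^{-x}$ to rewrite $-\frac{\pi}{2}\int_{0}^{\infty}e^{-sx}\Psi(e^{-x})dx$ as $-\frac{\pi}{2}\int_{0}^{1}u^{s-1}\Psi(u)du$, then interchange the class-group sum with the integral. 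Writing $\beta_{\mathfrak{a}}=2\pi N\mathfrak{a}/\sqrt{|D|}$, the resulting inner integral $\int_{0}^{1}u^{s-1}e^{-\beta_{\mathfrak{a}}u}du$ becomes $\beta_{\mathfrak{a}}^{-s}\gamma(s,\beta_{\mathfrak{a}})$ after the scaling $w=\beta_{\mathfrak{a}}u$, exactly paralleling the passage to the incomplete gamma function in lines (1.10)--(1.12).

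The key recognition step is then to split $\gamma(s,\beta_{\mathfrak{a}})=\Gamma(s)-\Gamma(s,\beta_{\mathfrak{a}})$. The $\Gamma(s)$ portion reassembles, via the definitions $L_K(s,\chi)=\sum_{\mathfrak{a}}\chi(\mathfrak{a})(N\mathfrak{a})^{-s}$ and $\Omega_K(s,\chi)=(2\pi)^{-s}\Gamma(s)|D|^{s/2}L_K(s,\chi)$, into precisely $-\frac{\pi}{2}\Omega_K(s,\chi)$, since $\beta_{\mathfrak{a}}^{-s}=(2\pi)^{-s}|D|^{s/2}(N\mathfrak{a})^{-s}$; the $\Gamma(s,\beta_{\mathfrak{a}})$ portion produces the series $\frac{\pi}{2}\sum_{\mathfrak{a}}\chi(\mathfrak{a})(\sqrt{|D|}/2\pi N\mathfrak{a})^s\Gamma(s,\beta_{\mathfrak{a}})$, and collecting the three contributions yields the claimed formula. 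The main obstacle I anticipate is analytic justification rather than algebra: one must establish a growth bound $\mathfrak{O}_K(t)\ll t^B e^{-\pi t/2}$ (obtainable from Stirling applied to $\Gamma(\frac{1}{2}+it)$ together with a convexity estimate for $L_K(\frac{1}{2}+it,\chi)$) to guarantee both the absolute convergence of $\mathfrak{\Upsilon}(s)$ and the validity of the Fubini interchange on the left, and one must verify absolute convergence of the class-group sum for $\Re(s)>1$ to license the sum-integral interchange on the right. These are the analogues of the bound $\lambda(t)\ll t^A e^{-\frac{\pi}{4}t}$ invoked after Theorem 1.
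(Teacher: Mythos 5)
Your proposal is correct and follows exactly the route the paper intends: the paper's proof of Theorem 2 consists of "applying the same concepts as in the introduction" — i.e., taking the Laplace transform $\mathcal{L}(\cdot)(s-\tfrac{1}{2})$ of both sides of (2.6), evaluating the left side via $\int_{0}^{\infty}e^{-(s-\frac{1}{2})x}\cos(xt)\,dx$ to produce the kernel $\bar{K}(s,t)$, and reducing the theta-type term to incomplete gamma functions via the substitution $u=e^{-x}$ and the splitting $\gamma(s,\beta)=\Gamma(s)-\Gamma(s,\beta)$. Your algebra checks out, and your attention to the growth bound on $\mathfrak{O}_K(t)$ and to the sum--integral interchange supplies justification the paper leaves implicit.
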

\begin{proof}
We may easily evaluate the case $\phi(s)=1,$ and so $f(t)=1,$ for all $t.$ Consider
\begin{equation}\frac{1}{2\pi i}\int_{d-i\infty}^{d+i\infty}(2\pi)^{-s}|D|^{s/2}\Gamma(s)L_K(s,\chi)y^sds,\end{equation}
for $d>1,$ and move the line of integration to $d=\frac{1}{2}.$ Since $L_K(s,\chi)$ has a simple pole at $s=1$ when $\chi$ is trivial, we compute [8, pg.512, eq.(22.50)] $$R_{s=1}=\frac{|\mathfrak{H}|\delta(\chi)}{\bar{D}},$$ (where $\delta(\chi)=0$ when $\chi$ is non-trivial, $1$ otherwise) and obtain
\begin{equation}\int_{0}^{\infty}\mathfrak{O}_K(t)\cos(xt)dt=\frac{\pi}{2}\left(\frac{|\mathfrak{H}|\delta(\chi)e^{x/2}}{\bar{D}}-e^{-x/2}\Psi(e^{-x})\right),\end{equation}
where $\Psi(x)=\sum_{\mathfrak{a}}\chi(\mathfrak{a})e^{-2\pi(N\mathfrak{a})x/\sqrt{|D|}}.$ (Note that (2.6) is precisely $\Psi(y^{-1}).$)
Applying the same concepts as in our introduction, we may obtain the theorem.

\end{proof}

Due to the kernel, we again have $\mathfrak{\Upsilon}(s)=-\mathfrak{\Upsilon}(1-s),$ and as a direct corollary we have [8, pg. 512, eq.(22.54)]
\begin{equation}\Omega_K(s,\chi)=\frac{|\mathfrak{H}|\delta(\chi)}{\bar{D}s(s-1)}+\sum_{\mathfrak{a}}\chi(\mathfrak{a})\left(\frac{\sqrt{|D|}}{2\pi N\mathfrak{a}}\right)^s\Gamma(s,\frac{2\pi N\mathfrak{a}}{\sqrt{|D|}})+\sum_{\mathfrak{a}}\chi(\mathfrak{a})\left(\frac{\sqrt{|D|}}{2\pi N\mathfrak{a}}\right)^{1-s}\Gamma(1-s,\frac{2\pi N\mathfrak{a}}{\sqrt{|D|}}).\end{equation}

\section{Dirichlet's Problem in the half plane and other Applications}

Here we discuss some results related to (1.3) and (1.5) that we hope will encourage further interest. First we discuss the relationship to the solution of Dirichlet's problem in the half plane,

\begin{equation}\frac{\partial^2 u}{\partial y^2}+\frac{\partial^2 u}{\partial x^2}=0,\end{equation}
where $y\in\mathbb{R},$ $x\ge0,$ initial condition $u(y,0)=h(y),$ and growth condition $u(y,x)\rightarrow0,$ as $|y|\rightarrow\infty.$ The known solution is given as  the Poisson integral [4, pg.36],
$$u(y,x)=\frac{x}{\pi}\int_{\mathbb{R}}\frac{h(t)}{(t-y)^2+x^2}dt.$$
Making the change of variables $t=t+y,$ $x=s-\frac{1}{2},$ we may state this as 
\begin{equation}u(y,s-\frac{1}{2})=\frac{s-\frac{1}{2}}{\pi}\int_{\mathbb{R}}\frac{h(t+y)}{t^2+(s-\frac{1}{2})^2}dt,\end{equation}
Hence, to incorporate our integral equation, we could impose the additional condition to the Dirichlet problem that,
\begin{equation}u(0,s-\frac{1}{2})=\frac{s-\frac{1}{2}}{\pi}\int_{\mathbb{R}}\frac{h(t)}{t^2+(s-\frac{1}{2})^2}dt=h(-i(s-\frac{1}{2}))-r(s),\end{equation}
where $r(s)$ is analytic for $\Re(s)>1.$ Our results are applicable if one sets 
$$r(s)=\frac{C}{s-1}+\pi^{-s/2}\sum_{n\ge1}\frac{a_n}{n^{s}}\Gamma(\frac{s}{2},\pi n^2),$$ where $C$ is a constant, and the coefficients $a_n$ depend on $L(s),$ and subsequently the solution to Theorem 1.1 equates to $h(t)=\gamma(\frac{1}{2}+it)L(\frac{1}{2}+it),$ with the associated gamma factor $\gamma(s).$ The particular case of (1.4), we have $h(t)=\pi^{-(\frac{1}{2}+it)/2}\Gamma(\frac{1}{2}+it)\zeta(\frac{1}{2}+it).$

\par Next we mention other tangential results concerning criteria for the Riemann Hypothesis. First we recall for $0<\Re(s)<1,$ the well-known formula
\begin{equation} \int_{0}^{\infty}t^{s-1}\cos(xt)dt=\frac{\Gamma(s)\cos(\frac{\pi}{2}s)}{x^s}.\end{equation}
Hence for $0<\Re(s)<1,$
\begin{equation} \int_{0}^{\infty}t^{s-1}\left(\int_{0}^{\infty}\cos(xt)(2\psi(x^2)-\frac{1}{x})dx\right)dt=\frac{2\xi(s)\Gamma(s)\cos(\frac{\pi}{2}s)}{s(s-1)}.\end{equation}
This gives us ($0<c<1$)
\begin{equation} 2\psi(x^2)-\frac{1}{x}=\int_{0}^{\infty}\cos(xt)\left(\frac{1}{2\pi i }\int_{c-i\infty}^{c+i\infty}\frac{2\xi(s)\Gamma(s)\cos(\frac{\pi}{2}s)t^{-s}}{s(s-1)}ds\right)dt=\int_{0}^{\infty}\cos(xt)\dot{I}(t)dt,\end{equation}
say.
So we may now compute (using (1.3))
\begin{equation}-\frac{\lambda(t)}{t^2+\frac{1}{4}}=\int_{0}^{\infty}\cos(xt)e^{-x/2}\left(\int_{0}^{\infty}\cos(e^{-x}u)\dot{I}(u)du\right)dx.\end{equation}
It is also possible to prove (3.5) using Parseval's theorem for Mellin transforms and the functional equation (1.2). \par We now recast the known criteria for the RH concerning (3.6) outlined in [3] in a different form. (See [3, Definition 1.1] for a criterion for an entire function to belong to the Laguerre-P\'{o}lya class.)
\begin{theorem} The Riemann Hypothesis is equivalent to the statement that the integral
$$\ddot{I}(t):=\int_{0}^{\infty}\cos(xt)\left(-\partial_x^2+\frac{1}{4}\right)e^{-x/2}\left(\int_{0}^{\infty}\cos(e^{-x}u)\dot{I}(u)du\right)dx,$$
has only real zeros. Consequently, the Riemann Hypothesis is true if and only if $\ddot{I}(t)$ is in the Laguerre-P\'{o}lya class. 
\end{theorem}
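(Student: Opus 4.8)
The plan is to reduce the statement to the classical equivalence between the Riemann Hypothesis and the reality of the zeros of Riemann's $\Xi$-function $\lambda(t)=\xi(\tfrac{1}{2}+it)$. The substantive work is already carried out in (3.4): the iterated integral defining $\ddot{I}(t)$ is there shown to equal $\lambda(t)/(t^2+\tfrac{1}{4})$, so I would begin by simply recording the identity $\ddot{I}(t)=\xi(\tfrac{1}{2}+it)/(t^2+\tfrac{1}{4})$. No further manipulation of the inner integrals is needed once (3.4) is in hand; everything hinges on comparing the zero set of this explicit quotient with the nontrivial zeros of $\zeta$.

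Next I would analyze the zeros. The denominator factors as $t^2+\tfrac{1}{4}=(t-\tfrac{i}{2})(t+\tfrac{i}{2})$ and vanishes only at the non-real points $t=\pm\tfrac{i}{2}$, which correspond to $s=0$ and $s=1$. Since $\xi(0)=\xi(1)=\tfrac{1}{2}\neq0$, the factor $(t^2+\tfrac{1}{4})^{-1}$ introduces poles at $t=\pm\tfrac{i}{2}$ but cancels none of the zeros of the numerator; hence the zeros of $\ddot{I}$ coincide exactly with the zeros of $\lambda$. Writing a zero as $t_0$ and setting $s_0=\tfrac{1}{2}+it_0$, the relation $\lambda(t_0)=0\iff\xi(s_0)=0$, together with the fact that the zeros of $\xi$ are precisely the nontrivial zeros of $\zeta$, shows that $t_0$ is real if and only if $\Re(s_0)=\tfrac{1}{2}$. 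Therefore $\ddot{I}$ has only real zeros if and only if every nontrivial zero of $\zeta$ lies on the critical line, which is the Riemann Hypothesis.

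For the second assertion I would appeal to the classical theorem of P\'olya in the form of the criterion of [3, Definition 1.1]. The relevant entire object is the numerator $\lambda(t)=(t^2+\tfrac{1}{4})\ddot{I}(t)$, which is real on the real axis, even (by the functional equation (1.2)), and of order one, hence of genus at most one. For such a function the Laguerre-P\'olya dichotomy asserts that membership in the class is equivalent to the reality of all its zeros. Combining this with the first part shows that the Riemann Hypothesis is equivalent to the membership of $\lambda$ in the Laguerre-P\'olya class; since $t^2+\tfrac{1}{4}$ is a fixed polynomial that is positive on $\mathbb{R}$ and has no real zeros, the same conclusion transfers to $\ddot{I}$.

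The step I expect to be the main obstacle is the bookkeeping at the two exceptional points $t=\pm\tfrac{i}{2}$. Because $\ddot{I}$ is meromorphic rather than entire, one must phrase the Laguerre-P\'olya conclusion for the genuinely entire function $\lambda=(t^2+\tfrac{1}{4})\ddot{I}$ and confirm that the poles manufactured by the denominator neither mask nor destroy any true zero of $\lambda$; the computation $\xi(0)=\xi(1)=\tfrac{1}{2}\neq0$ settles precisely this point. The remainder is a direct citation of the known order, parity, and reality properties of $\lambda$ and of the criterion in [3].
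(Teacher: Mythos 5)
Your treatment of the first equivalence is correct and in fact more direct than the paper's. You read the zero set of $\ddot{I}$ straight off the identity (3.4), $\ddot{I}(t)=\lambda(t)/(t^2+\tfrac{1}{4})$, and the observation that $\lambda(\pm\tfrac{i}{2})=\xi(0),\xi(1)=\tfrac{1}{2}\neq 0$ correctly rules out any interaction between the denominator's zeros at $t=\pm\tfrac{i}{2}$ and the zeros of $\lambda$, so the zero sets of $\ddot{I}$ and $\lambda$ coincide and the first claim reduces to the classical criterion. The paper reaches this conclusion by a different route: it verifies that the kernel $\ddot{k}(x)=e^{-x/2}\int_{0}^{\infty}\cos(e^{-x}u)\dot{I}(u)\,du$ is even, positive and monotone, i.e.\ ``admissible'' in the sense of [3, Definition 1.2]. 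Your argument dispenses with that machinery for this half of the theorem, which is a genuine simplification.

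The gap is in the second assertion. The Laguerre--P\'olya class is a class of \emph{entire} functions, and the very computation you rely on in part one shows that $\lambda(t)/(t^2+\tfrac{1}{4})$ has genuine poles at $t=\pm\tfrac{i}{2}$; indeed $\ddot{I}$ is holomorphic only in the strip $|\Im t|<\tfrac{1}{2}$ (consistent with the fact that $\ddot{k}(x)$ decays only like $e^{-|x|/2}$), so it is not entire and cannot literally belong to the class. Your Hadamard--P\'olya argument does establish that RH is equivalent to membership of $\lambda=(t^2+\tfrac{1}{4})\ddot{I}$ in the Laguerre--P\'olya class, but the closing step --- that the conclusion ``transfers'' to $\ddot{I}$ because $t^2+\tfrac{1}{4}$ is positive on $\mathbb{R}$ --- is not valid: dividing an entire Laguerre--P\'olya function by a real polynomial with non-real zeros leaves the class of entire functions altogether. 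You flag exactly this as the main obstacle but then do not resolve it, so what you have proved is the statement for $(t^2+\tfrac{1}{4})\ddot{I}$, not for $\ddot{I}$ itself. This is precisely where the paper's proof does its substantive (and debatable) work: it uses the admissibility of $\ddot{k}$ to assert that $\ddot{I}$ is a real entire function before invoking [3, Definition 1.1]. Any complete argument must either supply that step or explain in what modified sense the second assertion of the theorem is to be read.
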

\begin{proof} The proof uses the integral (3.7), the operator from [6], coupled with comparing the
kernel of $\ddot{I}(t),$ given by $$\ddot{k}(x):=\left(-\partial_x^2+\frac{1}{4}\right)e^{-x/2}\left(\int_{0}^{\infty}\cos(e^{-x}u)\dot{I}(u)du\right)dx,$$ to what is considered an `admissible' kernel 
according to the definition given in [3, Definition 1.2]. For example, it is easily verified that $\ddot{k}(x)$ is even by the functional equation for the Jacobi theta function (and (3.6)), $\ddot{k}(x)>0$ for all $x\in\mathbb{R},$ and $\frac{d}{dx}\ddot{k}(x)<0$ for all $x>0.$ Hence the kernel is monotone decreasing for $x>0.$ The fact that the kernel satisfies these properties tells us that $\ddot{I}(t)$ is a real entire function, and it is possible to have only real zeros. The remainder of the proof utilizes the well-known criterion that the
Riemann hypothesis is equivalent to the statement that all the zeros of $\lambda(t)$ are real, together with [3, Definition 1.1].
\end{proof}

We can also include the Hankel transform in our re-stating of the RH criteria found in [3] using the above ideas. Recall that the Hankel transform of a suitable function $f(r)$ is given by $H(k)=\int_{0}^{\infty}f(r)J_{v}(kr)rdr,$ and $J_{v}(x)$ is the Bessel function.
\begin{theorem} The Riemann Hypothesis is equivalent to the statement that the function
$$\bar{G}(t):=\int_{0}^{\infty}\cos(xt)\left(-\partial_x^2+\frac{1}{4}\right)e^{-3x/2}\left(\int_{0}^{\infty}J_{0}(e^{-x}u)\dot{H}(u)du\right)dx,$$
has only real zeros. Consequently, the Riemann Hypothesis is true if and only if $\bar{G}(t)$ is in the Laguerre-P\'{o}lya class. 
\end{theorem}
\begin{proof} We mimic the proof of the last theorem. First, for $0<\Re(s)<1$ we have that
$$\int_{0}^{\infty}t^{s-1}\left(\int_{0}^{\infty}J_{0}(kt)(2\psi(k^2)-\frac{1}{k})\right)dt=\frac{2^{s-1}\xi(s)\Gamma(\frac{s}{2})}{\Gamma(1-\frac{s}{2})s(s-1)}.$$
Hence, using the inverse Mellin and inverse Hankel transform (with $v=0$), we find
$$\frac{1}{k}(2\psi(k^2)-\frac{1}{k}))=\int_{0}^{\infty}J_{0}(kt)t\dot{H}(t)dt,$$
where ($c\in(0,1)$)
$$\dot{H}(t)=\frac{1}{2\pi i}\int_{c-i\infty}^{c+i\infty}\frac{t^{-s}2^{s-1}\xi(s)\Gamma(\frac{s}{2})}{\Gamma(1-\frac{s}{2})s(s-1)}ds.$$
Putting $k=e^{-x},$ multiplying by $e^{-3x/2},$ taking the Fourier cosine transform and the applying the same arguments we used previously gives the result.
\end{proof}

\par At this point we are left with some questions regarding the solutions of Theorem 1.1. First, in light of equation (1.6) resembling a ``modified" Fredholm integral of the second kind, does the integral equation in Theorem 1.1 admit application of a kind of Fredholm theory? Can anything be said regarding the uniqueness of solution of (1.6) for each $g(s)$? It is clear through Mellin transforms that uniqueness of $f(s)$ follows for the choices we have made for $g(s).$ However, it is still open if this is true for any analytic $g(s),$ as the general Fredholm theory doesn't clearly offer an answer. \par More interesting relations may be obtained by expanding $L_K(s,\chi)$ in (2.4) into a sum of Epstein zeta functions, with formulas related to the work in [9, 10, 17]. This would allow us to obtain integrals related to a theta functions of the form $\sum_{n,m\in\mathbb{Z} }\chi_{n,m}q^{an^2+bnm+cm^2},$ $4ac-b^2>0,$where $q=e^{-\pi x},$ $x>0.$ It may also be of interest to look at the integral $\ddot{I}(t)$ by explicitly computing $\dot{I}(u)$ using the Residue theorem.

1390 Bumps River Rd. \\*
Centerville, MA
02632 \\*
USA \\*
E-mail: alexpatk@hotmail.com
\end{document}